\newtheorem{theorem}{Theorem}
\newtheorem{claim}[theorem]{Claim}
\newtheorem{lemma}[theorem]{Lemma}
\newtheorem{remark}[theorem]{Remark}
\newtheorem*{mytheorem1}{Theorem \ref{th:gen}}
\newtheorem*{mytheorem2}{Theorem \ref{th:frac}}
\title{Clique number of the square of a line graph}%
\author[1]{Ma\l gorzata \'{S}leszy\'{n}ska-Nowak\thanks{m.sleszynska@mini.pw.edu.pl}}
\affil[1]{Faculty of Mathematics and Information Science, Warsaw University of Technology, Warsaw, Poland}
\begin{document}
\maketitle

\begin{abstract}

An \emph{edge coloring} of a graph $G$ is strong if each color class is an induced matching of $G$. The \emph{strong chromatic index} of $G$, denoted by $\chi _{s}^{\prime }(G)$, is the minimum number of colors for which $G$ has a strong edge coloring. The strong chromatic index of $G$ is equal to the chromatic number of the square of the line graph of $G$. The chromatic number of the square of the line graph of $G$ is greater than or equal to the clique number of the square of the line graph of $G$, denoted by $\omega(L)$. 

In this note we prove that $\omega(L) \le 1.5 \Delta_{G}^2$ for every graph $G$. Our result allows to calculate an upper bound for the fractional strong chromatic index of $G$, denoted by $\chi_{fs}^\prime(G)$. We prove that $\chi_{fs}^{\prime}(G) \le 1.75 \Delta_G^2$ for every graph $G$.

\end{abstract}

\section{Introduction}

A \emph{strong edge coloring} of a graph $G$ is an edge coloring in which every color class is an induced matching, that is, any two vertices that belong to distinct edges of the same color are not adjacent (in particular, no two edges of the same color intersect). The strong chromatic index of $G$, denoted by $\chi_{s}^{\prime}(G)$, is the minimum number of colors in a strong edge coloring. 

The concept of the strong edge coloring was introduced around 1985 by Erd\H{o}s and Ne\v{s}et\v{r}il \cite{ErdosNesetril}. They conjectured that for every graph $G$, with maximum degree $\Delta_G$, $\chi_{s}^{'}(G) \le \frac{5}{4}\Delta_G^2$. The example of graph obtained from the cycle of length five by replacing each vertex by an independent set of size $\frac{\Delta}{2}$ shows that this bound, if true, is tight. 

The trivial bound for the strong chromatic index of $G$ is $2\Delta_G ^{2}-2\Delta_G +1$. Molloy and Reed \cite{MoloyReed} proved that $\chi_{s}^{'}(G) \le (2-\epsilon )\Delta_G ^{2}$ for $\Delta_G $ sufficiently large, where $\epsilon $ is a small constant around $\frac{1}{50}$. Recently Bruhn and Joos \cite{BruhJoos} improved it to the $1.93\Delta_G ^{2}$.

We can look at this problem from a different angle. A \emph{line graph of $G$} is a graph whose each vertex represents an edge of $G$ and two vertices are adjacent if and only if their corresponding edges are incident in $G$. A \emph{square of a graph $H$} is a graph with the same set of vertices as $H$, in which two vertices are adjacent when their distance in $H$ is at most 2. The strong chromatic index of the graph $G$ is equal to the chromatic number of the square of the line graph of $G$ (say $L$). The chromatic number of $L$ is greater than or equal to the clique number of $L$ (denoted by $\omega(L)$), so finding bounds for $\omega(L)$ is also an interesting problem. It is not known if the clique number of $L$ is bounded by $\frac{5}{4}\Delta_G^2$ \cite{SixProblems}. Chung et al. \cite{ChungEtAl} proved that if $L$ is a clique then $G$ has at most $\frac{5}{4}\Delta_G^2$ edges. Faudree et al. \cite{FaudreeEtAl2} proved that if $G$ is a bipartite graph then the clique number of $L$ is at most $\Delta_G^2$. The example of $K_{\Delta,\Delta}$ shows that this bound is tight. Recently Bruhn and Joos \cite{BruhJoos} proved that for $G$ with $\Delta_G \ge 400$ the clique number of $L$ is at most $1.74\Delta_G^2$. We improved this result.

In this note, we focus on two topics related with the strong chromatic index of graphs. The first problem is finding the upper bound of the clique number of the square of the line graph of $G$. In our main theorem we proved an upper bound for the general case:
\begin{mytheorem1}
\label{th:gen}
Let $G$ be a simple graph and $L$ be a square of the line graph of $G$. Then the clique number of $L$ is at most $1.5 \Delta_{G}^2$.
\end{mytheorem1}
We also present (Theorem \ref{th:bip}) a new proof of known bounds for bipartite graphs ($\Delta_G^2$). 
The second subject is a fractional strong chromatic index, denoted by $\chi_{fs}^{\prime}(G)$. Using Theorem \ref{th:gen} we proved an upper for $\chi_{fs}^{\prime}(G)$:

\begin{mytheorem2}
\label{th:frac}
Let $G$ be a simple graph. Then the fractional strong chromatic index of $G$ is at most $1.75 \Delta_G^2$.
\end{mytheorem2}

\section{Upper bound for the clique number of the square of the line graph of $G$}

For any two different edges from a graph $G$ we define \emph{$dist_G(e,f)$} as a number of edges in the shortest path between $e$ and $f$ plus 1, i.e. $dist_G(e, f) = 1 $ iff $e$ and $f$ intersect.

\begin{remark}
Let $G$ be a simple graph, $L$ be a square of the line graph of $G$ and $H$ be a subgraph of $G$, such that for each $e,f \in E(H)$ we have $dist_{G}(e, f) \le 2$. Then $\omega(L) = |E(H)|$.
\end{remark}

\subsection{Bipartite graphs}
For the better understanding of our method, first we will present a new proof of known bounds for bipartite graphs. In the proof of our main theorem, which is more complicated, we will use the same technique.

\begin{theorem}
\label{th:bip}
Let $G$ be a simple bipartite graph and $L$ be a square of the line graph of $G$. Then the clique number of $L$ is at most $\Delta_{G}^2$.
\end{theorem}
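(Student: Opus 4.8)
The plan is to bound the number of edges in a subgraph $H$ of $G$ in which every two edges are at distance at most $2$, since by the Remark this quantity equals $\omega(L)$. So I would fix such an $H$, achieving $\omega(L) = |E(H)|$, and work entirely inside $H$, using that $G$ is bipartite with bipartition classes, say, $X$ and $Y$, and that the maximum degree is $\Delta_G$. The goal is to show $|E(H)| \le \Delta_G^2$.

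\medskip

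First I would pick a reference edge $e_0 = uv \in E(H)$ with $u \in X$ and $v \in Y$, and classify every other edge $f \in E(H)$ according to how it achieves $dist_G(e_0, f) \le 2$. The key structural observation in the bipartite case is that a path of length at most $2$ in a bipartite graph connecting two edges is very constrained: if $f$ shares an endpoint with $e_0$ it is counted among the edges incident to $u$ or to $v$, while if $f$ is disjoint from $e_0$ then the connecting path of length exactly $2$ must run through a single intermediate vertex, and bipartiteness forces that vertex to lie on the opposite side from whichever endpoint of $f$ it connects to. So I would argue that every edge of $H$ must be incident to a vertex in $N[u] \cup N[v]$, and then partition $E(H)$ by recording, for each edge $f$, the endpoint it ``uses'' near $u$ and the endpoint it ``uses'' near $v$.

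\medskip

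The cleanest way to finish, and the heart of the argument, is to set up an injection from $E(H)$ into a product set of size $\Delta_G^2$. Concretely I would try to assign to each edge $f \in E(H)$ an ordered pair $(a, b)$ where $a$ is a neighbor of $u$ (in $X$'s opposite side, so at most $\Delta_G$ choices) and $b$ is a neighbor of $v$ (at most $\Delta_G$ choices), chosen so that $f$ is determined by the pair. The distance condition should guarantee that such a pair always exists: walking from $e_0$ to $f$, the edge $f$ reaches toward $u$'s neighborhood through one endpoint and toward $v$'s neighborhood through the other, and bipartiteness aligns these so that one endpoint of $f$ is a neighbor of $v$ and the other is a neighbor of $u$, or coincides with $u$ or $v$ themselves. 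Ensuring that the map $f \mapsto (a,b)$ is well-defined and injective is where I expect the main obstacle to lie: I will need to verify there is no ambiguity when $f$ touches both neighborhoods, and handle the degenerate edges (those sharing a vertex with $e_0$) so they still fit into the same $\Delta_G \times \Delta_G$ grid without collision. Once injectivity is established the bound $|E(H)| \le \Delta_G^2$ is immediate, and the tightness example $K_{\Delta,\Delta}$ mentioned in the introduction shows the argument cannot be improved.

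\medskip

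An alternative, perhaps more transparent route to the same count is to observe that in the bipartite setting any edge of $H$ at distance $\le 2$ from $e_0 = uv$ must have one endpoint in $N(v) \subseteq X$ and the other in $N(u) \subseteq Y$ (after absorbing the boundary cases where an endpoint equals $u$ or $v$), so that $E(H)$ embeds into the bipartite ``box'' $N(v) \times N(u)$; since $|N(v)| \le \Delta_G$ and $|N(u)| \le \Delta_G$, this box has at most $\Delta_G^2$ potential edges and hence $|E(H)| \le \Delta_G^2$. This framing makes the role of bipartiteness explicit — it is precisely what forbids the odd-length connecting paths that would let an edge escape the box — and it is the formulation I would develop in detail, reserving the injection description above as the bookkeeping that makes it rigorous.
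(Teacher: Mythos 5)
Your central structural claim is false, and the argument collapses exactly at the point you flagged as the ``main obstacle.'' You assert that every edge $f \in E(H)$ must (after absorbing boundary cases) have one endpoint in $N(u)$ and the other in $N(v)$, so that $E(H)$ embeds into the box $N(v) \times N(u)$. But $dist_G(e_0, f) \le 2$ only requires a \emph{single} connecting edge of $G$ from \emph{one} endpoint of $e_0$ to \emph{one} endpoint of $f$; the far endpoint of $f$ is completely untethered, and bipartiteness does nothing to pull it back toward the other endpoint of $e_0$. Concretely: take $x_1 \in N(u) \setminus \{v\}$ and let $H$ consist of $e_0 = uv$ together with a star $x_1y_1, \dots, x_1y_k$ whose leaves $y_j$ lie outside $N(v)$. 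Every pair of edges of this $H$ is at distance at most $2$ (the star edges pairwise intersect, and each is at distance $2$ from $e_0$ via the $G$-edge $ux_1$), yet no star edge has an endpoint in $N(v)$ — the endpoints are $x_1$, which lies on the same side as $v$, and $y_j \notin N(v)$ — and all $k$ of them share the single vertex $x_1 \in N(u)$. So the pair $(a,b)$ you want to assign does not exist for these edges, and no injection of the described kind can be constructed. This $H$ is small, so it refutes your lemma rather than the theorem, but the burden of your counting scheme rests entirely on that lemma. The true weaker statement — every edge of $H$ meets $N(u) \cup N(v)$ — only yields $|E(H)| \le 2\Delta_G^2$.

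The paper escapes this trap by anchoring at a \emph{vertex} rather than an edge: it takes $v$ of maximum degree $\Delta_H$ in $H$, so that every remaining edge must be within distance $2$ of \emph{all} $\Delta_H$ edges of $H$ at $v$ simultaneously — a universally quantified condition, far stronger than proximity to one reference edge. After splitting off the edges adjacent to the star at $v$ (sets $B$ and $C$, with $C$ handling $G$-edges at $v$ that are not in $H$), each remaining edge must contain a ``super vertex'' adjacent in $G$ to \emph{every} $H$-neighbor of $v$, and bipartiteness forces exactly one such endpoint per edge. Counting super vertices (at most $\Delta_G - 1$, each incident to at most $\Delta_G - \Delta_H$ edges of the set $D$) gives $|E(H)| \le \Delta_G^2 - \Delta_G + \Delta_H \le \Delta_G^2$. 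That universal quantifier over the star at $v$ is precisely the leverage your single-edge classification lacks; repairing your approach would require exploiting the pairwise distance condition among the escaping edges themselves, which is in effect what the paper's analysis of $D$ does.
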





\begin{proof}
Let $H$ be a subgraph of $G$, such that for each $e,f \in E(H)$ we have $dist_{G}(e, f) \le 2$. We will show that $H$ has at most $\Delta_{G}^2$ edges.

Consider a vertex $v \in V(H)$ of degree $\Delta_H$. Edges of the graph $H$ can be divided into following sets (see figure \ref{fig:sets_bip}):

\begin{enumerate}
\item $A = \{e \in E(H): v \in e\}$, $|A| = \Delta_H$

$A$ is a set of edges of $H$ which are incident to $v$.


Notice that all other edges of the graph $H$ are in a distance at most 2 to all edges from $A$.

\item $B = \{e \in E(H): e \notin A \wedge \exists_{f \in A} e$ and $f$ are adjacent$\}$, $|B| \le \Delta_H (\Delta_H - 1)$

$B$ is a set of edges of $H$ which are adjacent to edges from $A$ and are not contained in $A$.

\item $C = \{e \in E(H): \exists_{f \in E(G)}(v \in f \wedge f \notin E(H)$  $\wedge$  $e$ and $f$ are adjacent$)\}$, $|C| \le (\Delta_G - \Delta_H)\Delta_H$

$C$ is a set of edges of $H$ which are adjacent to edges of $G - H$ which are incident to $v$.

\item $D = \{e \in E(H): e \notin C \wedge \forall_{f \in A}dist_G(e, f) = 2\}$

$D$ is a set of edges of $H$ which are at the distance 2 to all edges from $A$ and are not contained in $C$.

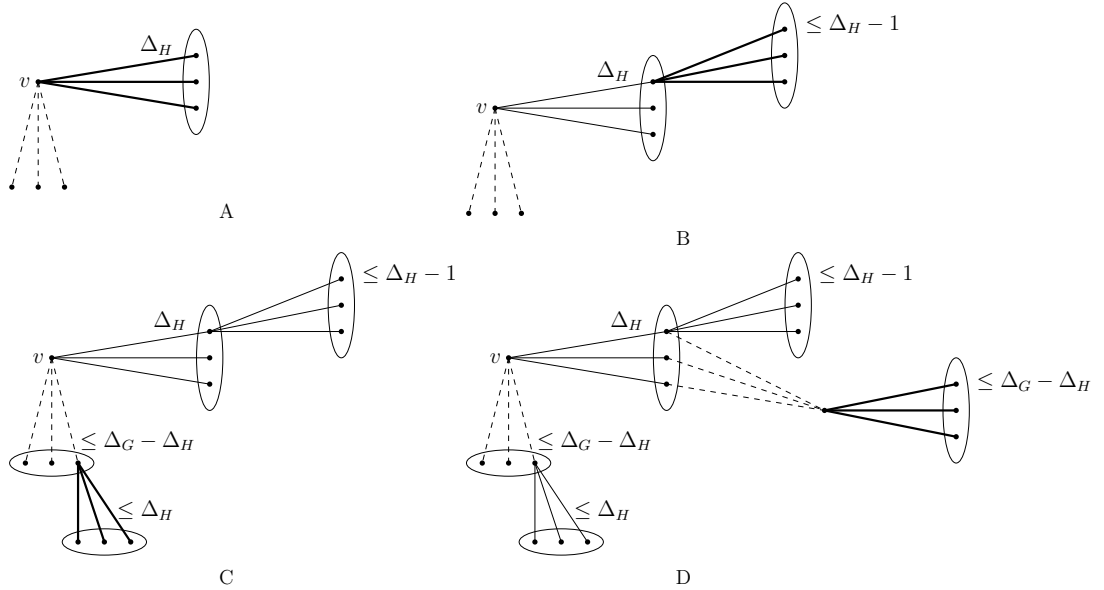
\begin{figure}[th]
\centering
\scalebox{.7}{
\begin{tabular}{cc}
\begin{subfigure}{.5\textwidth}
\begin{tikzpicture}
\coordinate (A) at (0.5,5.5);
\coordinate (B) at (3.5,5);
\coordinate (C) at (3.5,5.5);
\coordinate (D) at (3.5,6);
\coordinate (H) at (0,3.5);
\coordinate (I) at (0.5,3.5);
\coordinate (J) at (1,3.5);

\path[fill=black] (A) circle (0.05);

\path[fill=black] (B) circle (0.05);
\draw[very thick](A) -- (B);
\path[fill=black] (C) circle (0.05);
\draw[very thick](A) -- (C);
\path[fill=black] (D) circle (0.05);
\draw[very thick](A) -- (D);

\path[fill=black] (H) circle (0.05);
\draw[dashed](A) -- (H);
\path[fill=black] (I) circle (0.05);
\draw[dashed](A) -- (I);
\path[fill=black] (J) circle (0.05);
\draw[dashed](A) -- (J);

\draw ($(A)!1!(C)$) ellipse (0.25 and 1);

\node at (A) [left] {$v$};
\node at ($(A)!.75!(D)$) [above] {$\Delta_H$};


\end{tikzpicture}
\caption{A}
\end{subfigure}
&
\begin{subfigure}{.5\textwidth}
\begin{tikzpicture}

\coordinate (A) at (8,5.5);
\coordinate (B) at (11,5);
\coordinate (C) at (11,5.5);
\coordinate (D) at (11,6);
\coordinate (E) at (13.5,6);
\coordinate (F) at (13.5,6.5);
\coordinate (G) at (13.5,7);
\coordinate (H) at (7.5,3.5);
\coordinate (I) at (8,3.5);
\coordinate (J) at (8.5,3.5);

\path[fill=black] (A) circle (0.05);

\path[fill=black] (B) circle (0.05);
\path[draw](A) -- (B);
\path[fill=black] (C) circle (0.05);
\path[draw](A) -- (C);
\path[fill=black] (D) circle (0.05);
\path[draw](A) -- (D);

\path[fill=black] (E) circle (0.05);
\draw[very thick](D) -- (E);
\path[fill=black] (F) circle (0.05);
\draw[very thick](D) -- (F);
\path[fill=black] (G) circle (0.05);
\draw[very thick](D) -- (G);

\path[fill=black] (H) circle (0.05);
\draw[dashed](A) -- (H);
\path[fill=black] (I) circle (0.05);
\draw[dashed](A) -- (I);
\path[fill=black] (J) circle (0.05);
\draw[dashed](A) -- (J);

\draw ($(A)!1!(C)$) ellipse (0.25 and 1);
\draw ($(D)!1!(F)$) ellipse (0.25 and 1);

\node at (A) [left] {$v$};
\node at ($(A)!.75!(D)$) [above] {$\Delta_H$};
\node at ($(D)!1.1!(G)$) [right] {$\le \Delta_H - 1$};

\end{tikzpicture}
\caption{B}
\end{subfigure}
\\
\begin{subfigure}{.5\textwidth}
\begin{tikzpicture}

\coordinate (A) at (0.5,0.5);
\coordinate (B) at (3.5,0);
\coordinate (C) at (3.5,0.5);
\coordinate (D) at (3.5,1);
\coordinate (E) at (6,1);
\coordinate (F) at (6,1.5);
\coordinate (G) at (6,2);
\coordinate (H) at (0,-1.5);
\coordinate (I) at (0.5,-1.5);
\coordinate (J) at (1,-1.5);
\coordinate (K) at (1,-3);
\coordinate (L) at (1.5,-3);
\coordinate (M) at (2,-3);

\path[fill=black] (A) circle (0.05);

\path[fill=black] (B) circle (0.05);
\path[draw](A) -- (B);
\path[fill=black] (C) circle (0.05);
\path[draw](A) -- (C);
\path[fill=black] (D) circle (0.05);
\path[draw](A) -- (D);

\path[fill=black] (E) circle (0.05);
\path[draw](D) -- (E);
\path[fill=black] (F) circle (0.05);
\path[draw](D) -- (F);
\path[fill=black] (G) circle (0.05);
\path[draw](D) -- (G);

\path[fill=black] (H) circle (0.05);
\draw[dashed](A) -- (H);
\path[fill=black] (I) circle (0.05);
\draw[dashed](A) -- (I);
\path[fill=black] (J) circle (0.05);
\draw[dashed](A) -- (J);

\path[fill=black] (K) circle (0.05);
\draw[very thick](J) -- (K);
\path[fill=black] (L) circle (0.05);
\draw[very thick](J) -- (L);
\path[fill=black] (M) circle (0.05);
\draw[very thick](J) -- (M);

\draw ($(A)!1!(C)$) ellipse (0.25 and 1);
\draw ($(D)!1!(F)$) ellipse (0.25 and 1);
\draw ($(A)!1!(I)$) ellipse (0.8 and 0.25);
\draw ($(J)!1!(L)$) ellipse (0.8 and 0.25);

\node at (A) [left] {$v$};
\node at ($(A)!.75!(D)$) [above] {$\Delta_H$};
\node at ($(D)!1.1!(G)$) [right] {$\le \Delta_H - 1$};
\node at ($(A)!.79!(J)$) [right] {$\le \Delta_G - \Delta_H $};
\node at ($(J)!.6!(M)$) [right] {$\le \Delta_H $};

\end{tikzpicture}
\caption{C}
\end{subfigure}
&
\begin{subfigure}{.5\textwidth}
\begin{tikzpicture}

\coordinate (A) at (8,0.5);
\coordinate (B) at (11,0);
\coordinate (C) at (11,0.5);
\coordinate (D) at (11,1);
\coordinate (E) at (13.5,1);
\coordinate (F) at (13.5,1.5);
\coordinate (G) at (13.5,2);
\coordinate (H) at (7.5,-1.5);
\coordinate (I) at (8,-1.5);
\coordinate (J) at (8.5,-1.5);
\coordinate (K) at (8.5,-3);
\coordinate (L) at (9,-3);
\coordinate (M) at (9.5,-3);
\coordinate (N) at (14,-0.5);
\coordinate (O) at (16.5,-1);
\coordinate (P) at (16.5,-0.5);
\coordinate (R) at (16.5,0);

\path[fill=black] (A) circle (0.05);

\path[fill=black] (B) circle (0.05);
\path[draw](A) -- (B);
\path[fill=black] (C) circle (0.05);
\path[draw](A) -- (C);
\path[fill=black] (D) circle (0.05);
\path[draw](A) -- (D);

\path[fill=black] (E) circle (0.05);
\path[draw](D) -- (E);
\path[fill=black] (F) circle (0.05);
\path[draw](D) -- (F);
\path[fill=black] (G) circle (0.05);
\path[draw](D) -- (G);

\path[fill=black] (H) circle (0.05);
\draw[dashed](A) -- (H);
\path[fill=black] (I) circle (0.05);
\draw[dashed](A) -- (I);
\path[fill=black] (J) circle (0.05);
\draw[dashed](A) -- (J);

\path[fill=black] (K) circle (0.05);
\path[draw](J) -- (K);
\path[fill=black] (L) circle (0.05);
\path[draw](J) -- (L);
\path[fill=black] (M) circle (0.05);
\path[draw](J) -- (M);

\path[fill=black] (N) circle (0.05);
\path[fill=black] (O) circle (0.05);
\path[fill=black] (P) circle (0.05);
\path[fill=black] (R) circle (0.05);
\draw[dashed](B) -- (N);
\draw[dashed](C) -- (N);
\draw[dashed](D) -- (N);
\draw[very thick](N) -- (O);
\draw[very thick](N) -- (P);
\draw[very thick](N) -- (R);

\draw ($(A)!1!(C)$) ellipse (0.25 and 1);
\draw ($(D)!1!(F)$) ellipse (0.25 and 1);
\draw ($(A)!1!(I)$) ellipse (0.8 and 0.25);
\draw ($(J)!1!(L)$) ellipse (0.8 and 0.25);
\draw ($(N)!1!(P)$) ellipse (0.25 and 1);

\node at (A) [left] {$v$};
\node at ($(A)!.75!(D)$) [above] {$\Delta_H$};
\node at ($(D)!1.1!(G)$) [right] {$\le \Delta_H - 1$};
\node at ($(A)!.79!(J)$) [right] {$\le \Delta_G - \Delta_H $};
\node at ($(J)!.6!(M)$) [right] {$\le \Delta_H $};
\node at ($(N)!1.1!(R)$) [right] {$\le \Delta_G - \Delta_H $};


\end{tikzpicture}
\caption{D}
\end{subfigure}
\\
\end{tabular}
}
\caption{Sets A, B, C, D}
\label{fig:sets_bip}
\end{figure}

Let $S$ be a subgraph of $H$ induced by D.

We define a super vertex as a vertex which is adjacent in $G$ to all neighbors of $v$ from $H$, and it is not $v$. Because $G$ is bipartite, each edge from $S$ contain exactly one super vertex. We have at most $\Delta_G - 1$ super vertices in $G$. Furthermore each super vertex is incident in $S$ to at most $\Delta_G - \Delta_H$ edges (because there is exactly $\Delta_H$ edges incident to this super vertex in $G$ from neighbors of $v$ from $H$). So the cardinality of $D$ is at most
\begin{center}
$(\Delta_G - 1)(\Delta_G - \Delta_H) $
\end{center}
\end{enumerate}

Now we can sum up the number of edges in $H$
\begin{center}
$|E(H)| \le \Delta_H + \Delta_H(\Delta_H - 1) + (\Delta_G - \Delta_H)\Delta_H + (\Delta_G - 1)(\Delta_G - \Delta_H) =
 \Delta_G^2 - \Delta_G + \Delta_H \le \Delta_G^2$
\end{center}
\end{proof}

\subsection{General case}

In our proof we will use the following lemma.

\begin{lemma}
\label{lem:first}
Let $G$ be a graph with maximum degree $\Delta$, $p$ and $w$ be integers such that $\Delta \le p \le w$ and $\Delta > w - p$. Consider $p$ vertex covers of $G$ (not necessarily different) such that each vertex cover contains at most $w$ vertices.
Moreover assume that for each vertex $v \in V(G)$ we have $deg(v) \le w - a$, where $a$ is a number of vertex covers which contain $v$. Then the graph $G$ has at most $w^{2} - \frac{pw}{2}$ edges.
\end{lemma}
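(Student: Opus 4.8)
The plan is to bound $|E(G)|$ by a double-counting argument over the $p$ vertex covers, in the same spirit as the proof of Theorem~\ref{th:bip}, and then to isolate a single inequality about how often the covers overlap as the real content. To set up, I would first discard isolated vertices and note that deleting a degree-$0$ vertex from any cover leaves a vertex cover of no larger size; so I may assume every vertex lying in some cover has positive degree. Writing $C_1,\dots,C_p$ for the covers and $a(v)=|\{i:v\in C_i\}|$ for each vertex $v$, this gives $\sum_v a(v)=\sum_i|C_i|=:S\le pw$.

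The core estimate comes from each cover separately. Since $C_i$ is a vertex cover, every edge has an endpoint in $C_i$, so $|E(G)|\le\sum_{v\in C_i}\deg(v)$; feeding in the hypothesis $\deg(v)\le w-a(v)$ yields $|E(G)|\le |C_i|\,w-\sum_{v\in C_i}a(v)$. Summing over all $i$ and using $\sum_i\sum_{v\in C_i}a(v)=\sum_v a(v)^2$, I obtain the key inequality
\[
p\,|E(G)|\;\le\;wS-\sum_v a(v)^2 .
\]
Because $S\le pw$ and $w-\tfrac{p}{2}>0$, a short calculation shows that the desired bound $|E(G)|\le w^2-\tfrac{pw}{2}$ follows at once from the single second-moment inequality $\sum_v a(v)^2\ge\tfrac{p}{2}S$, equivalently $\sum_v a(v)\bigl(2a(v)-p\bigr)\ge 0$.

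Proving this last inequality is where I expect the real difficulty, and it is exactly the point at which the hypotheses $\Delta\le p\le w$ and $\Delta>w-p$ must be used. Purely numerically the inequality can fail (for instance if every $a(v)\le 1$), so it cannot hold by counting alone: it must exploit that the $C_i$ are genuine vertex covers that cannot be pairwise almost-disjoint. Writing $\sum_v a(v)^2=\sum_{i,j}|C_i\cap C_j|$, the claim is that on average the covers overlap substantially. The mechanism I would try to quantify is that every $v$ satisfies $a(v)\le w-\deg(v)$, so a vertex with $\deg(v)$ close to $\Delta$ is \emph{missing} from at least $p-(w-\deg(v))\ge p-(w-\Delta)>0$ covers, and each such cover must then contain the whole neighborhood $N(v)$; this inflates the sizes of those covers and hence their mutual intersections. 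Converting this local forcing into the global bound $\sum_{i,j}|C_i\cap C_j|\ge\tfrac{p}{2}S$ — most plausibly through a weighted averaging over the covers, analogous to the ``super vertex'' count in the proof of Theorem~\ref{th:bip} — is the main obstacle, and once it is established the lemma follows immediately from the displayed inequality.
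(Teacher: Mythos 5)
Your setup is sound as far as it goes: the chain $p\,|E(G)|\le \sum_v a(v)\deg(v)\le wS-\sum_v a(v)^2$ is correct, and so is your conditional derivation of the lemma from the second-moment inequality. But the proposal is incomplete exactly where you flag it, and worse, it cannot be completed: the inequality $\sum_v a(v)^2\ge \frac{p}{2}S$ is \emph{false} under the lemma's hypotheses. Take $G=K_{1,3}$ with center $c$ and leaves $\ell_1,\ell_2,\ell_3$, and $p=w=10$ (so $\Delta=3\le p\le w$ and $\Delta=3> w-p=0$). Use seven covers equal to $\{c\}$ and three covers equal to $\{\ell_1,\ell_2,\ell_3\}$; each is a vertex cover of size at most $w$, and the degree condition holds since $\deg(c)=3\le w-a(c)=10-7$ and $\deg(\ell_i)=1\le 10-3$. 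Then $S=16$ and $\sum_v a(v)^2=49+3\cdot 9=76$, while $\frac{p}{2}S=80$, i.e.\ $\sum_v a(v)\bigl(2a(v)-p\bigr)=-8<0$. The lemma's conclusion is of course untouched here ($|E(G)|=3\le w^2-\frac{pw}{2}=50$); what fails is your reduction, whose lossy step is replacing $S$ by $pw$ at the end. When the covers are small, $S$ is much less than $pw$, and then the overlap term need not be as large as $\frac{p}{2}S$ — so no exploitation of the cover structure can rescue the target inequality, because it is simply not a consequence of the hypotheses.

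It is worth noting that the mechanism you describe heuristically is precisely the one the paper uses, but aimed at a local rather than a global averaged statement. Since a vertex $v^*$ of maximum degree satisfies $a(v^*)\le w-\Delta$, at least $p-w+\Delta>0$ covers avoid $v^*$, and each such cover must contain all of $N(v^*)$; hence every neighbor $u$ of $v^*$ has $a(u)\ge p-w+\Delta$ and therefore $\deg(u)\le 2w-p-\Delta$. The paper then fixes \emph{one} cover avoiding $v^*$ — it consists of the $\Delta$ neighbors of $v^*$ plus at most $w-\Delta$ further vertices of degree at most $\Delta$ — and counts all edges through that single cover, getting $|E(G)|\le \Delta(2w-p-\Delta)+(w-\Delta)\Delta$ alongside the trivial $|E(G)|\le w\Delta$; the minimum of these two bounds, maximized over $\Delta$, peaks at $\Delta=\frac{2w-p}{2}$ with value exactly $w^2-\frac{pw}{2}$. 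In short, you identified the right forcing phenomenon (covers missing a high-degree vertex must swallow its whole neighborhood), but routed it into a global second-moment claim that is false; the correct use is local: a degree bound on the neighbors of a maximum-degree vertex, followed by an edge count against one well-chosen cover and an optimization over $\Delta$.
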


\begin{proof}
Let $v \in V(G)$ be a vertex with degree $\Delta$. The vertex $v$ is contained in at most $w - \Delta$ vertex covers (by assumption), so every neighbor of $v$ is contained in at least $p - w + \Delta$ vertex covers (note that $p - w + \Delta > 0$). Therefore every neighbor of $v$ has degree at most $2w - p - \Delta$.


Let us consider one vertex cover (say $vc$) which does not contain $v$. $vc$ contains exactly $\Delta$ neighbors of $v$ and at most $w - \Delta$ other vertices. Every edge from $G$ has to be incident to at least one vertex from $vc$. Therefore we have
\begin{center}
$|E(G)| \le \Delta (2w - p - \Delta) + (w - \Delta) \Delta = \Delta( 3w - p - 2\Delta)$
\end{center}
and moreover
\begin{center}
$|E(G)| \le w \Delta$.
\end{center}
\begin{figure}[th]
\centering
\includegraphics[width = 7cm]{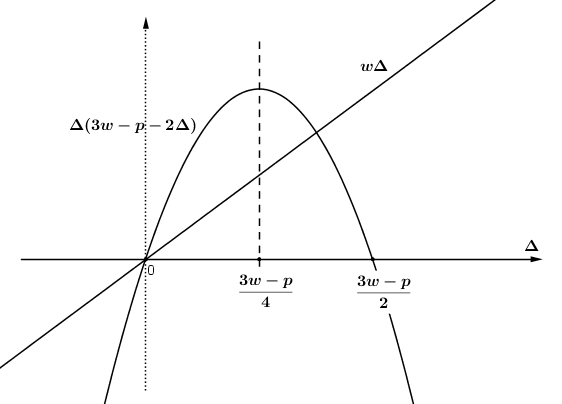}
\caption{Bounding functions}
\label{fig:bounds}
\end{figure}

From these bounds we have the number of edges of $G$ bounded by $w^{2} - \frac{pw}{2}$ (see figure \ref{fig:bounds}, the functions intersect at point $\Delta = \frac{2w - p}{2}$ and for $p \le w $ and $ w \ge 0$ we have $ \frac{3w - p}{4} \le \frac{2w - p}{2} \le \frac{3w - p}{2}$, so the maximum value occurs at the point of intersection).
\end{proof}

\begin{remark}
Lemma \ref{lem:first} is sharp. $G$ has exactly $w^{2} - \frac{pw}{2}$ edges for: $G = 2K_{\Delta, \Delta}$, $w = p = 2\Delta$ and following vertex covers: $\Delta$ vertex covers contain all vertices from the first partition class of both copies of $K_{\Delta, \Delta}$, next $\Delta$ vertex covers contain all vertices from the second partition class of both copies of $K_{\Delta, \Delta}$.
\end{remark}

Now we can prove our main theorem.

\begin{theorem}
\label{th:gen}
Let $G$ be a simple graph and $L$ be a square of the line graph of $G$. Then the clique number of $L$ is at most $1.5 \Delta_{G}^2$.
\end{theorem}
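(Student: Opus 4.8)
The plan is to mirror the bipartite argument of Theorem \ref{th:bip}: use the Remark to replace $\omega(L)$ by $|E(H)|$, where $H\subseteq G$ is a subgraph all of whose edges are pairwise at distance at most $2$ in $G$; fix a vertex $v\in V(H)$ of maximum degree $\Delta_H$ in $H$; and split $E(H)$ into the same four sets $A,B,C,D$. The first three are bounded exactly as before, giving
\[
|A|+|B|+|C|\le \Delta_H+\Delta_H(\Delta_H-1)+(\Delta_G-\Delta_H)\Delta_H=\Delta_G\,\Delta_H ,
\]
so everything reduces to bounding $|D|$, the set of edges of $H$ lying at distance exactly $2$ from every edge of $A$ and not in $C$.

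The essential new idea is to feed $D$ into Lemma \ref{lem:first}. Let $S$ be the subgraph of $H$ induced by $D$; note that $v\notin V(S)$ and that, since $D\subseteq E(H)$ and $v$ has maximum degree in $H$, every vertex of $S$ has degree at most $\Delta_H$, i.e.\ $\Delta_S\le\Delta_H$. I would first establish the key structural claim: for each neighbour $u_i$ of $v$ in $H$, the set $N_G(u_i)\cap V(S)$ is a vertex cover of $S$. Indeed, for $e=xy\in D$ and $f=vu_i\in A$ we have $dist_G(e,f)=2$, so some endpoint of $e$ is joined by an edge to some endpoint of $f$; this connecting edge cannot touch $v$, for otherwise $e$ would either be adjacent to an edge of $A$ or (if that edge leaves $H$) lie in $C$, both excluded. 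Hence $x$ or $y$ is adjacent to $u_i$, which is precisely the statement that $N_G(u_i)\cap V(S)$ meets every edge of $S$. This produces $\Delta_H$ vertex covers of $S$, each of size at most $\Delta_G-1$ (since $v\in N_G(u_i)$ but $v\notin V(S)$).

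Next I would verify the degree hypothesis of the lemma with $w=\Delta_G$ and $p=\Delta_H$. For $x\in V(S)$ let $a_x$ count the covers containing $x$, i.e.\ the number of $u_i$ adjacent to $x$. The $a_x$ edges $xu_i$ are distinct from the $deg_S(x)$ edges of $D$ incident to $x$ (each $xu_i$ shares $u_i$ with $vu_i\in A$, hence is not in $D$), so $a_x+deg_S(x)\le deg_G(x)\le\Delta_G$, giving $deg_S(x)\le w-a_x$. All hypotheses of Lemma \ref{lem:first} are now in place ($\Delta_S\le p\le w$), and when the remaining inequality $\Delta_S>w-p$ holds the lemma yields
\[
|D|=|E(S)|\le w^2-\tfrac{pw}{2}=\Delta_G^2-\tfrac{\Delta_H\Delta_G}{2};
\]
in the complementary range $\Delta_S\le\Delta_G-\Delta_H$ the crude estimate $|E(S)|\le(\Delta_G-1)\Delta_S\le\Delta_G(\Delta_G-\Delta_H)$ is even stronger than this. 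Summing the four pieces gives
\[
|E(H)|\le \Delta_G\Delta_H+\Delta_G^2-\tfrac{\Delta_H\Delta_G}{2}=\Delta_G^2+\tfrac{\Delta_H\Delta_G}{2}\le \tfrac32\,\Delta_G^2 ,
\]
using $\Delta_H\le\Delta_G$, which is the claim.

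I expect the main obstacle to be the structural claim that each $N_G(u_i)\cap V(S)$ is a vertex cover: this is exactly where the non-bipartite case departs from Theorem \ref{th:bip}, because now both endpoints of an edge of $D$ may be needed to cover the various $u_i$ (there is no single ``super vertex'' per edge), so one must argue carefully — using $e\notin C$ together with the distance-$2$ condition — that every connecting edge avoids $v$ and therefore lands on some $u_i$. Checking that the choice $w=\Delta_G$, $p=\Delta_H$ meets all of the hypotheses of Lemma \ref{lem:first}, and cleanly disposing of the degenerate range $\Delta_S\le\Delta_G-\Delta_H$, is the remaining bookkeeping.
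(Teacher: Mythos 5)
Your proposal is correct and follows essentially the same route as the paper: the same $A,B,C,D$ decomposition around a vertex $v$ of degree $\Delta_H$, the same application of Lemma \ref{lem:first} to $S$ with $p=\Delta_H$, $w=\Delta_G$ and the covers $N_G(u_i)\cap V(S)$, the same case split on $\Delta_S$ versus $\Delta_G-\Delta_H$, and the same final sum $\Delta_G\Delta_H+\Delta_G^2-\frac{\Delta_G\Delta_H}{2}\le 1.5\Delta_G^2$. Your only (harmless, in fact tidier) deviations are that your Case-1 estimate $|E(S)|\le \Delta_G\Delta_S\le\Delta_G(\Delta_G-\Delta_H)\le\Delta_G^2-\frac{\Delta_G\Delta_H}{2}$ makes the paper's preliminary split at $\Delta_H<0.75\Delta_G$ unnecessary, and that you spell out the verification that each $N_G(u_i)\cap V(S)$ really is a vertex cover (via $e\notin B\cup C$ forcing the connecting edge to avoid $v$), which the paper asserts without detail.
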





\begin{proof}
Let $H$ be a subgraph of $G$, such that for each $ e,f \in E(H)$ we have $dist_{G}(e, f) \le 2$. We will show that $H$ has at most $1.5 \Delta_{G}^2$ edges. Notice that $0 \le \Delta_H \le \Delta_G$, because $H$ is a subgraph of $G$.

If $\Delta_H < 0.75 \Delta_G$, we have $|E(G)| \le 2 \Delta_G \Delta_H < 1.5 \Delta_G^2$, so there is nothing to prove. If $0.75 \Delta_G \le \Delta_H \le \Delta_G$ the proof is similar to the proof of the theorem \ref{th:bip}:

Consider a vertex $v \in V(H)$ with degree $\Delta_H$. We can divide edges from $H$ into four sets: $A$, $B$, $C$, $D$. Definitions of sets are the same as in the proof of theorem \ref{th:bip}, the cardinality is the same for $A$, $B$ and $C$. Let us look at the set $D$:

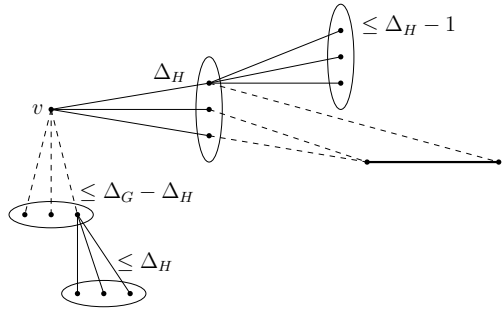
\begin{figure}[th]
\centering
\scalebox{.7}{
\begin{tikzpicture}

\coordinate (A) at (8,0.5);
\coordinate (B) at (11,0);
\coordinate (C) at (11,0.5);
\coordinate (D) at (11,1);
\coordinate (E) at (13.5,1);
\coordinate (F) at (13.5,1.5);
\coordinate (G) at (13.5,2);
\coordinate (H) at (7.5,-1.5);
\coordinate (I) at (8,-1.5);
\coordinate (J) at (8.5,-1.5);
\coordinate (K) at (8.5,-3);
\coordinate (L) at (9,-3);
\coordinate (M) at (9.5,-3);
\coordinate (N) at (14,-0.5);
\coordinate (O) at (16.5,-0.5);

\path[fill=black] (A) circle (0.05);

\path[fill=black] (B) circle (0.05);
\path[draw](A) -- (B);
\path[fill=black] (C) circle (0.05);
\path[draw](A) -- (C);
\path[fill=black] (D) circle (0.05);
\path[draw](A) -- (D);

\path[fill=black] (E) circle (0.05);
\path[draw](D) -- (E);
\path[fill=black] (F) circle (0.05);
\path[draw](D) -- (F);
\path[fill=black] (G) circle (0.05);
\path[draw](D) -- (G);

\path[fill=black] (H) circle (0.05);
\draw[dashed](A) -- (H);
\path[fill=black] (I) circle (0.05);
\draw[dashed](A) -- (I);
\path[fill=black] (J) circle (0.05);
\draw[dashed](A) -- (J);

\path[fill=black] (K) circle (0.05);
\path[draw](J) -- (K);
\path[fill=black] (L) circle (0.05);
\path[draw](J) -- (L);
\path[fill=black] (M) circle (0.05);
\path[draw](J) -- (M);

\path[fill=black] (N) circle (0.05);
\path[fill=black] (O) circle (0.05);
\draw[dashed](B) -- (N);
\draw[dashed](C) -- (N);
\draw[dashed](D) -- (O);
\draw[very thick](N) -- (O);

\draw ($(A)!1!(C)$) ellipse (0.25 and 1);
\draw ($(D)!1!(F)$) ellipse (0.25 and 1);
\draw ($(A)!1!(I)$) ellipse (0.8 and 0.25);
\draw ($(J)!1!(L)$) ellipse (0.8 and 0.25);

\node at (A) [left] {$v$};
\node at ($(A)!.75!(D)$) [above] {$\Delta_H$};
\node at ($(D)!1.1!(G)$) [right] {$\le \Delta_H - 1$};
\node at ($(A)!.79!(J)$) [right] {$\le \Delta_G - \Delta_H $};
\node at ($(J)!.6!(M)$) [right] {$\le \Delta_H $};

\end{tikzpicture}
}
\caption{Sets A, B, C, D}
\label{fig:sets_gen}
\end{figure}

Let $S$ be a subgraph of $H$ induced by D.

\begin{claim}
The graph $S$ has at most $\Delta_G^2 - \frac{\Delta_G \Delta_H}{2}$edges.
\end{claim}

\begin{proof}

Notice that $\Delta_S \le \Delta_H \le \Delta_G$. There are two cases:

\begin{itemize}
\item Case 1:
\begin{center}
$ \Delta_S \le \Delta_G - \Delta_H$
\end{center}
Notice that $\Delta_S \le 0.25 \Delta_G$ (because $ 0.75 \le \Delta_H \le \Delta_G$).

Let us calculate number of edges of $S$. Each neighbor of $v$ from $H$ can be adjacent to at most $\Delta_G$ vertices in $G$, each such vertex can be incident to at most $\Delta_S$ edges in $S$, moreover each edge from $S$ has to contain at least one such vertex. Therefore
\begin{center}
$|E(S)| \le \Delta_G \Delta_S \le 0.25 \Delta_G^2 \le \Delta_G^2 - \frac{\Delta_G \Delta_H}{2}$,
\end{center}
which was to be proved.
\item Case 2:
\begin{center}
$\Delta_S > \Delta_G - \Delta_H$
\end{center}
We use Lemma \ref{lem:first}.

For $ p = \Delta_H$ and $w = \Delta_G$ we have following $p$ vertex covers of $S$: for each neighbor of $v$ from $H$ (say $vs$), all vertices from $S$ which are adjacent in $G$ to $vs$ form a vertex cover of $S$. Each vertex cover has at most $w$ vertices.

Moreover for each vertex $u$ from $S$ we have $deg_S(u) \le \Delta_G - a$, where $a$ is a number of neighbors of $v$ from $H$ which are adjacent in $G$ to $u$ (so a is a number of vertex covers that contain $u$).

Therefore 
\begin{center}
$|E(S)| \le \Delta_G^2 - \frac{\Delta_G \Delta_H}{2}$.
\end{center}
\end{itemize}

\end{proof}
Now we can sum up the number of edges in $H$
\begin{center}
$|E(H)| \le \Delta_H + \Delta_H(\Delta_H - 1) + (\Delta_G - \Delta_H)\Delta_H + \Delta_G^2 - \frac{\Delta_G \Delta_H}{2} = \Delta_G^2 + \frac{\Delta_G \Delta_H}{2} \le 1.5 \Delta_G^2$
\end{center}
\end{proof}

\section{A fractional strong chromatic index}

We will introduce a definition of a fractional strong chromatic index of graph and prove an upper bound of it.

Let $G$ be a simple graph and $M(G)$ be a set of induced matchings in $G$. A \emph{fractional strong edge coloring} of $G$ is a non-negative weighting $w$ of $M(G)$ so that for each $e \in E(G)$ we have $\sum\limits_{m: e \in m} w(m) = 1.$

The weight $\alpha$ of this coloring is defined by:
\begin{center}
$\alpha = \sum\limits_{m \in M(G)} w(m),$
\end{center}
in this case we say that $G$ has a fractional strong $\alpha-$edge coloring.

The \emph{fractional strong chromatic index} of $G$, denoted by $\chi_{fs}^\prime(G)$  is the minimum $\alpha$ for which $G$ has a fractional strong $\alpha$-edge coloring.

\begin{theorem}
\label{th:frac}
Let $G$ be a simple graph. Then the fractional strong chromatic index of $G$ is at most $1.75 \Delta_G^2$.
\end{theorem}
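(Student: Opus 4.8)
The plan is to recast the fractional strong chromatic index as an ordinary fractional chromatic number and then feed the clique bound of Theorem~\ref{th:gen} into a known upper bound for that quantity. First I would record the exact correspondence underlying the whole paper: an induced matching of $G$ is precisely an independent set of $L$, because two edges $e,f$ of $G$ can share a color in a strong edge coloring exactly when $dist_G(e,f) > 2$, i.e.\ exactly when $e,f$ are non-adjacent in $L$. Hence a fractional strong $\alpha$-edge coloring of $G$ is literally a fractional coloring of $L$ of total weight $\alpha$, and therefore $\chi_{fs}^\prime(G) = \chi_f(L)$, where $\chi_f$ denotes the fractional chromatic number.

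The engine of the proof is the fractional relaxation of Reed's $\omega,\Delta,\chi$ conjecture, which, unlike the conjecture itself, is a theorem: for every graph $H$,
\[
\chi_f(H) \le \frac{\Delta(H) + 1 + \omega(H)}{2}.
\]
Applying this with $H = L$ reduces everything to estimating the two parameters $\Delta(L)$ and $\omega(L)$. For the clique number I would invoke Theorem~\ref{th:gen} directly to get $\omega(L) \le 1.5\,\Delta_G^2$. For the maximum degree I would use the trivial greedy count already recalled in the introduction: an edge $e = xy$ of $G$ is adjacent in $L$ to at most $2(\Delta_G-1)$ edges meeting $\{x,y\}$ and to at most $2(\Delta_G-1)^2$ edges at distance exactly $2$, so $\Delta(L) \le 2\Delta_G^2 - 2\Delta_G$ and $\Delta(L)+1 \le 2\Delta_G^2 - 2\Delta_G + 1$.

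Substituting both estimates then finishes the calculation:
\[
\chi_{fs}^\prime(G) = \chi_f(L) \le \frac{(2\Delta_G^2 - 2\Delta_G + 1) + 1.5\,\Delta_G^2}{2} = 1.75\,\Delta_G^2 - \Delta_G + 0.5,
\]
which is at most $1.75\,\Delta_G^2$ whenever $G$ has an edge, the remaining case $\Delta_G = 0$ being trivial since then $\chi_{fs}^\prime(G)=0$.

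The whole difficulty is concentrated in the tool being invoked. The identity $\chi_{fs}^\prime = \chi_f(L)$ and the degree/clique substitutions are routine, so the only genuine obstacle is the fractional Reed bound itself; a self-contained treatment would have to reprove it (via the probabilistic fractional-colouring argument of Molloy and Reed) rather than cite it. It is worth noting \emph{why} the constant comes out as it does: $1.75 = \frac{1}{2}(1.5 + 2)$ is exactly the average of the improved clique constant $1.5$ from Theorem~\ref{th:gen} and the trivial degree constant $2$, so any sharpening of $\omega(L)$ would pass linearly through this bound, halved.
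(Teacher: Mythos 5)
Your proposal is correct and follows essentially the same route as the paper: both identify $\chi_{fs}^\prime(G)$ with the fractional chromatic number of $L$, apply the Molloy--Reed fractional bound $\frac{\omega + \Delta + 1}{2}$, and substitute $\omega(L) \le 1.5\Delta_G^2$ (Theorem~\ref{th:gen}) together with $\Delta(L) \le 2\Delta_G^2 - 2\Delta_G$ to obtain $1.75\Delta_G^2 - \Delta_G + 0.5 \le 1.75\Delta_G^2$. Your explicit handling of the degenerate case $\Delta_G = 0$ is a small refinement the paper omits, but the argument is otherwise identical.
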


\begin{proof}
Let $L$ be a square of the line graph of $G$. The fractional strong chromatic index of $G$ is equal to the fractional strong chromatic number of $L$.

Molloy and Reed \cite{MoloyReed2} proved that the fractional strong chromatic number of the graph $G$ is at most 
\begin{center}
$\frac{\omega(G) + \Delta_G + 1}{2}$.
\end{center}
From the Theorem \ref{th:bip} we have that $\omega(L) \le 1.5 \Delta_G^2$. The maximum degree of $L$ is at most $2\Delta_G^2 - 2\Delta_G$.

So we have
\begin{center}
$\chi_{fs}^\prime(G) \le \frac{1.5 \Delta_G^2 + 2\Delta_G^2 - 2 \Delta_G+ 1}{2} = 1.75 \Delta_G^2 - \Delta_G + 0.5 \le 1.75 \Delta_G^2$.
\end{center}
\end{proof}

\section{Final remarks}
\subsection{Improving the upper bound}
We feel that it is possible to improve Theorem \ref{th:gen}. Lemma \ref{lem:first} is tight but when we count edges from $S$ we do not use all information about the structure of $S$. We know that $S$ is a clique in the square of the line graph of $G$. Perhaps using this information will give a better bound. 

\subsection{Molloy and Reed conjecture}
Molloy and Reed \cite{MoloyReed2} conjectured that the chromatic number of a graph $G$ is at most $\lceil\frac{\omega(G) + \Delta_G + 1}{2}\rceil$.
If this conjecture is true, our Theorem \ref{th:gen} will give us a new upper bound for the strong chromatic index of $G$: $\chi_s^\prime(G) \le \lceil1.75 \Delta_G^2\rceil$.

\subsection{Degenerate graphs}
A graph $G$ is \emph{$k$-degenerate} if every subgraph of $G$ contains a vertex of degree at most $k$. It is known that the strong chromatic index for degenerate graphs is bounded by a linear function of $k^2\Delta_G$ (see \cite{ChangNarayanan}, \cite{Barrett} and \cite{DGS}). Of course it implies a bound for the clique number of the square of the line graph but it would be nice to know something more. In particular, what is a clique number of the square of the line graph of planar graph?

\subsection{Other problems}
There are many other absorbing conjectures about the strong chromatic index of special classes of graphs. Faudree et al. \cite{FaudreeEtAl} conjectured that $\chi _{s}^{\prime }(G)\leq \Delta_G ^{2}$ when $G$ is a bipartite graph. Brualdi and Quinn \cite{BrualdiQuinn} formulated stronger conjecture: if $G$ is a bipartite graph and $\Delta _{i}$ is the maximum degree of a vertex in the $i$-th partition class then $\chi_{s}^{\prime }(G)\leq \Delta _{1}\Delta _{2}$.

\section{Acknowledgements}
Research was supported by the Polish National Science Center, decision no DEC-2013/11/N/ST1/03199.

\end{document}